\theoremstyle{theorem}
\newtheorem{theorem}{Theorem}
\theoremstyle{definition}
\newtheorem{definition}{Definition}
\newtheorem{remark}{Remark}
\title{Submanifolds in metallic Riemannian manifolds}
\author{Cristina E. Hretcanu and Adara M. Blaga}
     \keywords{Golden Riemannian structure, metallic Riemannian structure, normality of Riemannian structure, induced structure on submanifold}
     \subjclass[2010]{53B20, 53B25, 53C15}
     \numberwithin{equation}{section}
\begin{document}

\maketitle

\begin{abstract}
The aim of our paper is to focus on some properties of submanifolds in Riemannian manifolds endowed with
endomorphisms that generalize the Golden Riemannian structure, named metallic Riemannian structures.
We focus on the properties of the structure induced on submanifolds, named by us $\Sigma $-metallic Riemannian structures, especialy regarding the normality of this types of structure. Examples of structures induced on a sphere of codimension 1 by some metallic Riemannian structures defined on an Euclidean space are given.

\end{abstract}

\section{Introduction}
\label{intro}
{\it Metallic means family} was introduced by Vera W. de Spinadel in (\cite{Spinadel}) and it contains some generalizations of the Golden mean, such as {\it the Silver mean, the Bronze mean, the Copper mean, the Nickel mean} and many others.

A $(p, q)$-{\it metallic number} is the positive solution of the equation $x^{2}-px-q=0$ (for fixed positive integer values of $p$ and $q$) and it has the form:
\begin{equation}\label{e1}
\sigma_{p,q}=\frac{p+\sqrt{p^{2}+4q}}{2}.
\end{equation}

Some important members of the metallic mean family (\cite{Spinadel}) are the following: the Golden mean $\phi =\frac{1+\sqrt{5}}{2}$ (for $p=q=1$), the Silver mean $\sigma _{Ag}=\sigma _{2, 1}=1+\sqrt{2}$ (for $q=1$ and $p=2$), the Bronze mean $\sigma _{Br}=\sigma _{3, 1}=\frac{3+\sqrt{13}}{2}$ (for $q=1$ and $p=3$), the Subtle mean $\sigma_{4, 1}=2+\sqrt{5}=\phi^{3}$ (for $p=4$ and $q=1$), the Copper mean $\sigma _{Cu}=\sigma _{1, 2}=2$ (for $p=1$ and $q=2$), the Nickel mean $\sigma _{Ni}=\sigma _{1, 3}=\frac{1+\sqrt{13}}{2}$ (for $p=1$ and $q=3$) and so on.

Polynomial structures on manifolds were defined by S. I. Goldberg, K. Yano and N. C. Petridis in (\cite{Goldberg1} and \cite{Goldberg2}). C. E. Hretcanu and M. Crasmareanu  defined some particular cases of  polynomial structures, called \textit{Golden structure} (\cite{Hr2}, \cite{Hr3}, \cite{CrHr}, \cite{CrHrMu}) and some generalizations of this, called \textit{metallic structure} (\cite{Hr4}).

In this paper, after recalling the basic notions of Golden and metallic Riemannian manifolds (in Section 2), we focus on the properties of $\Sigma $-metallic Riemannian structures induced on submanifolds by metallic Riemannian structure (in Section 3) and we give some examples of structures induced on submanifolds by metallic Riemannian structures (in Section 4). In the next section we find some properties of the normal $\Sigma $-metallic Riemannian structures.

\section{Golden and metallic Riemannian structures}
\label{sec:1}
The \textit{Golden structure} (\cite{CrHr}) is a polynomial structure on a manifold $\overline{M}$ determined by a $(1,1)$-tensor field $J$ which satisfies:
 \begin{equation}\label{e2}
 J^{2}= J + I,
 \end{equation}
where $I$ is the identity operator on the Lie algebra $\mathcal{X}(\overline{M})$ of
vector fields on $\overline{M}$.

A {\it metallic structure} (\cite{Hr4}) is a polynomial structure on a manifold $\overline{M}$ determined by a $(1, 1)$-tensor field $J$ which satisfies:
\begin{equation}\label{e3}
J^{2}= pJ+qI,
\end{equation}
where $I$ is the identity operator on the Lie algebra $\mathcal{X}(\overline{M})$ of vector fields on $\overline{M}$ and $p, q$ are fixed positive integer numbers.

Moreover, if $(\overline{M}, \overline{g})$ is a Riemannian manifold endowed with a metallic structure $J$ such that the Riemannian metric $\overline{g}$ is $J$-compatible, i.e.:
\begin{equation} \label{e4}
\overline{g}(JX, Y)= \overline{g}(X, JY),
\end{equation}
for any $X, Y \in \mathcal{X}(\overline{M})$, then $(\overline{g}, J)$ is called {\it metallic Riemannian structure} and $(\overline{M},\overline{g}, J)$ is a {\it metallic Riemannian manifold}.

In a metallic Riemannian manifold $(\overline{M},\overline{g}, J)$, from (\ref{e3}) and (\ref{e4}) we get:
\begin{equation}\label{e5}
\overline{g}(JX, JY)=p \overline{g}(X, JY)+q \overline{g}(X, Y),
\end{equation}
for any $X, Y\in \mathcal{X}(\overline{M})$.

An almost product structure $F$ on a Riemannian manifold $(\overline{M},\overline{g})$ induces two metallic Riemannian structures on $\overline{M}$, given by (\cite{Hr4}):
\begin{equation}\label{e6}
J_{1}=\frac{p}{2}I +\left(\frac{2\sigma _{p, q}-p}{2}\right)F, \quad J_{2}=\frac{p}{2}I-\left(\frac{2\sigma _{p, q}-p}{2}\right)F.
\end{equation}

Conversely, every metallic structure $J$ on a Riemannian manifold $(\overline{M},\overline{g})$ induces two almost product structures on this manifold  (\cite{Hr4}):
\begin{equation}\label{e7}
F_{\pm }=\pm \left(\frac{2}{2\sigma _{p, q}-p}J-\frac{p}{2\sigma _{p, q}-p}I\right).
\end{equation}

\section{$\Sigma$-metallic Riemannian structures}
\label{sec:2}
\normalfont

In this section we assume that $M$ is an $n$-dimensional submanifold isometrically immersed in the $(n+r)$-dimensional metallic Riemannian manifold ($\overline{M}, \overline{g}, J)$, with $n, r \geq 1$ integer numbers.

We denote by $T_{x}(M)$ the tangent space of $M$ in a point $x \in M$ and by $T_{x}^{\bot }(M)$ the normal space of $M$ in $x$. Let $i_{*}$ be the differential of the immersion $i: M \rightarrow\overline{M}$. The induced Riemannian metric $g$ on $M$ is given by $g(X, Y)=\overline{g}(i_{*}X, i_{*}Y)$, for any $X, Y \in \mathcal{X}(M)$.
We fixe a local orthonormal basis $\{N_{1},...,N_{r}\}$ of the normal space $T_{x}^{\bot}(M)$. Hereafter we assume that the indices $\alpha, \beta, \gamma $ run over the range $\{1,..., r\}$.

The vector fields $J(i_{*}X)$ and $J(N_{\alpha})$ can be decomposed into tangential and normal components:
\begin{equation}\label{e100}
(i) \: J(i_{*}X)=i_{*}(PX)+\sum_{\alpha=1}^{r}u_{\alpha}(X)N_{\alpha}, \quad (ii) \: J(N_{\alpha})=i_{*}(\xi _{\alpha})+\sum_{\beta=1}^{r}a_{\alpha \beta} N_{\beta},
\end{equation}
for any $X \in T_{x}(M)$, where $P$ is a $(1, 1)$-tensor field on $M$, $\xi _{\alpha}$ are vector fields on $M$, $u_{\alpha}$ are $1$-forms on $M$ and $(a_{\alpha\beta})_{r}$ is an $r \times r$ matrix of smooth real functions on $M$.

In the rest of the paper we shall simply denote by $X$ the vector field $i_{*}X$, for $X \in \mathcal{X}(M)$.

\begin{theorem} (\cite{Hr4})
The structure $\Sigma =(P, g, u_{\alpha}, \xi _{\alpha},(a_{\alpha\beta})_{r})$, induced on a submanifold $M$ by the metallic Riemannian structure $(\overline{g}, J)$ on $\overline{M}$, satisfies the following equalities:
\begin{equation}\label{e9}
     P^{2}X=pPX+qX-\sum_{\alpha}u_{\alpha}(X)\xi _{\alpha},
\end{equation}
\begin{equation}\label{e10}
  (i) \ \ u_{\alpha}(PX)=pu_{\alpha}(X)-\sum_{\beta}a_{\alpha\beta}u_{\beta}(X), \quad
  (ii)\  a_{\alpha\beta}=a_{\beta \alpha},
  \end{equation}
\begin{equation}\label{e11}
  (i) \ \ u_{\beta}(\xi _{\alpha})=q\delta_{\alpha \beta}+pa_{\alpha\beta}-\sum_{\gamma }a_{\alpha \gamma }a_{\gamma \beta}, \quad
  (ii) \  P\xi _{\alpha}=p\xi _{\alpha}-\sum_{\beta}a_{\alpha\beta}\xi _{\beta},
\end{equation}
\begin{equation} \label{e12}
      (i) \ \ g(PX, Y)=g(X, PY), \quad
      (ii) \ u_{\alpha}(X)=g(X, \xi_{\alpha})
\end{equation}
for any $X, Y\in \mathcal{X}(M)$, where $\delta_{\alpha \beta}$ is the Kronecker delta and $p, q$ are fixed positive integers.
\end{theorem}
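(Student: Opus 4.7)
The plan is to exploit two properties of the ambient structure: the polynomial identity $J^{2}=pJ+qI$ and the compatibility $\overline{g}(JX,Y)=\overline{g}(X,JY)$. All six equalities will then follow either by applying $J$ once more to the decompositions (\ref{e100}) and separating tangential from normal components, or by testing the compatibility relation against tangent and normal vector fields.

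For (\ref{e9}) and (\ref{e10})(i), I apply $J$ to both sides of (\ref{e100})(i) and rewrite the left-hand side via $J^{2}X=pJX+qX$, itself expanded once more by (\ref{e100})(i); on the right-hand side I expand $J(PX)$ by (\ref{e100})(i) and $J(N_{\alpha})$ by (\ref{e100})(ii). Matching the tangential parts yields (\ref{e9}), and matching the coefficient of each $N_{\beta}$ yields (\ref{e10})(i) (the latter using the symmetry $a_{\alpha\beta}=a_{\beta\alpha}$, to be proved independently below). In the same spirit, applying $J$ to (\ref{e100})(ii) and using $J^{2}N_{\alpha}=pJN_{\alpha}+qN_{\alpha}$ produces two identities: the tangential comparison gives (\ref{e11})(ii), and the normal comparison gives (\ref{e11})(i) after expanding $\sum_{\beta}a_{\alpha\beta}J N_{\beta}$ into $\sum_{\beta}a_{\alpha\beta}\xi_{\beta}+\sum_{\beta,\gamma}a_{\alpha\beta}a_{\beta\gamma}N_{\gamma}$.

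For the symmetry identities I invoke $\overline{g}(JZ,W)=\overline{g}(Z,JW)$ at three choices of $(Z,W)$. Taking $Z,W\in\mathcal{X}(M)$ and substituting (\ref{e100})(i), the mixed tangential-normal pairings vanish by orthogonality of the normal frame to $TM$, and one is left with $g(PX,Y)=g(X,PY)$, which is (\ref{e12})(i). Taking $Z=X\in\mathcal{X}(M)$ and $W=N_{\alpha}$ gives, after substituting (\ref{e100})(i) on the left and (\ref{e100})(ii) on the right, $u_{\alpha}(X)=g(X,\xi_{\alpha})$, which is (\ref{e12})(ii). Finally, $(Z,W)=(N_{\alpha},N_{\beta})$ yields $a_{\alpha\beta}=a_{\beta\alpha}$, establishing (\ref{e10})(ii).

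The whole argument is a bookkeeping exercise and presents no serious obstacle; the only care required is tracking indices and separating tangential from normal pieces consistently. In particular, the proof of (\ref{e10})(i) uses the already-established symmetry (\ref{e10})(ii), so the logical order should be: first derive the three compatibility-based identities (\ref{e12})(i), (\ref{e12})(ii), (\ref{e10})(ii), then derive (\ref{e9}), (\ref{e10})(i) from (\ref{e100})(i), and finally (\ref{e11})(i), (\ref{e11})(ii) from (\ref{e100})(ii).
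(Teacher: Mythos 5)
Your proof is correct and follows the standard derivation: the paper states this theorem without proof, citing \cite{Hr4}, and the argument there is exactly what you carry out --- apply $J$ to the two decompositions in (\ref{e100}), use $J^{2}=pJ+qI$ to compare tangential and normal components (giving (\ref{e9}), (\ref{e10})(i), (\ref{e11})(i) and (\ref{e11})(ii)), and test the compatibility $\overline{g}(J\cdot,\cdot)=\overline{g}(\cdot,J\cdot)$ on tangent--tangent, tangent--normal and normal--normal pairs to obtain (\ref{e12})(i), (\ref{e12})(ii) and (\ref{e10})(ii). Your observation that the symmetry $a_{\alpha\beta}=a_{\beta\alpha}$ must be established before (\ref{e10})(i) can be written in the stated form is a correct and worthwhile point of care.
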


\begin{definition}
    A $\Sigma$-metallic Riemannian structure on a Riemannian manifold $(M, g)$ is given by the data $\Sigma =(P, g, u_{\alpha }, \xi _{\alpha }, (a_{\alpha \beta})_{r})$, determined by a $(1, 1)$-tensor field $P$ on $M$, the vector fields $\xi_{\alpha }$ on $M$, the $1$-forms $u_{\alpha }$ on $M$, the $r\times r$ matrix $(a_{\alpha \beta })_{r}$ of smooth real functions on $M$ which verify the relations (\ref{e9}), (\ref{e10}), (\ref{e11}) and (\ref{e12}).
\end{definition}

\begin{remark}
If $\Sigma =(P, g, u_{\alpha}, \xi _{\alpha},(a_{\alpha\beta})_{r})$ is the induced structure on the submanifold $M$ in the metallic
Riemannian manifold $(\overline{M},\overline{g},J)$, then $M$ is an invariant submanifold with respect to $J$ (i.e. $J(M) \subset TM$)
if and only if $(M,g,P)$ is a metallic Riemannian manifold, whenever $P$ is non-trivial (\cite{Hr4}).
\end{remark}

Let $\overline{\nabla}$ and $\nabla$ be the Levi-Civita connections on $(\overline{M},\overline{g})$ and $(M,g)$, respectively. The Gauss and Weingarten formulas are:
\begin{equation}\label{e13}
(i) \: \overline{\nabla}_{X}Y=\nabla_{X}Y+\sum_{\alpha=1}^{r}h_{\alpha}(X,Y)N_{\alpha}, \quad (ii) \:  \overline{\nabla}_{X}N_{\alpha}=-A_{\alpha}X+\nabla_{X}^{\bot}N_{\alpha},
\end{equation}
respectively, where $A_{\alpha}=:A_{N_{\alpha}}$ is the shape operator and $h(X,Y)=\sum_{\alpha=1}^{r}h_{\alpha}(X,Y)N_{\alpha}$ is the second fundamental form.
Also,
\begin{equation}\label{e130}
 h_{\alpha}(X, Y)=g(h(X,Y),N_{\alpha})=g(A_{\alpha}X, Y),
\end{equation}
for any $X, Y \in \mathcal{X}(M)$.

\begin{remark}
The normal connection $\nabla_{X}^{\bot}N_{\alpha}$ has the decomposition:
\begin{equation}\label{e14}
\nabla_{X}^{\bot}N_{\alpha}=\sum_{\beta=1}^{r}l_{\alpha\beta}(X)N_{\beta},
 \end{equation}
for any $X \in \mathcal{X}(M)$, $\alpha \in \{1,..., r\}$, and
\begin{equation}\label{e140}
l_{\alpha\beta}=-l_{\beta\alpha},
\end{equation}
 for any $\alpha, \beta \in \{1,..., r\}$.
\end{remark}
\begin{proof}
From $\overline{g}(\nabla_{X}^{\bot}N_{\alpha}, N_{\beta})+\overline{g}(N_{\alpha}, \nabla_{X}^{\bot}N_{\beta})=0$ we get:
$$\overline{g}(\sum_{\gamma }l_{\alpha\gamma }(X)N_{\gamma}, N_{\beta})+\overline{g}(N_{\alpha}, \sum_{\gamma }l_{\beta\gamma }(X)N_{\gamma })=0,$$
for any $X\in \mathcal{X}(M)$ and $\gamma \in \{1,..., r\}$. Thus we obtain (\ref{e140}).
\end{proof}

Using an analogy of a locally product manifold (\cite{Pitis}), we can define \textit{locally metallic Riemannian manifold}, as follows:

\begin{definition}
If $(\overline{M},\overline{g}, J)$ is a metallic Riemannian manifold and $J$ is parallel with respect to the Levi-Civita connection $\overline{\nabla}$ on $\overline{M}$ (i.e. $\overline{\nabla}J=0$), then we call $(\overline{M},\overline{g}, J)$ {\it a locally metallic Riemannian manifold}.
\end{definition}

As in the case of submanifolds in Riemannian manifolds with Golden structure (\cite{Hr2}), it is easy to verify the following relations regarding the covariant derivatives of  components of the $\Sigma =(P, g, u_{\alpha}, \xi _{\alpha},(a_{\alpha\beta})_{r})$-structure, induced on $M$ by the metallic Riemannian structure $(\overline{g}, J)$:

\begin{theorem} If $M$ is an $n$-dimensional submanifold of codimension $r$ in a locally metallic Riemannian manifold $(\overline{M},\overline{g}, J)$, then the $\Sigma =(P, g, u_{\alpha}, \xi _{\alpha},(a_{\alpha\beta})_{r})$-structure, induced on $M$ by the metallic Riemannian structure $(\overline{g}, J)$, has the following properties:

\begin{equation}\label{e17}
(\nabla_{X}P)(Y)=\sum_{\alpha=1}^{r}h_{\alpha}(X, Y)\xi _{\alpha}+\sum_{\alpha=1}^{r}u_{\alpha}(Y)A_{\alpha}X,
\end{equation}

\begin{equation}\label{e18}
(\nabla_{X}u_{\alpha})(Y)=-h_{\alpha}(X, PY)+\sum_{\beta=1}^{r}[l_{\alpha\beta}(X)u_{\beta}(Y)+a_{\beta\alpha}h_{\beta}(X, Y)],
\end{equation}

\begin{equation}\label{e19}
 \nabla_{X}\xi _{\alpha}=-P(A_{\alpha}X)+\sum_{\beta=1}^{r}a_{\alpha\beta}A_{\beta}X+\sum_{\beta=1}^{r}l_{\alpha\beta}(X)\xi _{\beta},
 \end{equation}

 \begin{equation}\label{e20}
 X(a_{\alpha\beta})=-u_{\alpha}(A_{\beta}X)-u_{\beta}(A_{\alpha}X)+\sum_{\gamma=1}^{r}[a_{\gamma \beta}l_{\alpha\gamma }(X)+ a_{\alpha\gamma }l_{\beta \gamma}(X)],
\end{equation}
for any $X, Y \in \mathcal{X}(M)$.
\end{theorem}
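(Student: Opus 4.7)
The plan is to exploit the hypothesis that $(\overline{M},\overline{g},J)$ is locally metallic, i.e. $\overline{\nabla}J=0$, which is equivalent to $\overline{\nabla}_X(JZ)=J(\overline{\nabla}_X Z)$ for every $X\in\mathcal{X}(M)$ and every $Z\in\mathcal{X}(\overline{M})$. I would apply this identity twice: once with $Z=Y$ a vector field tangent to $M$ (to obtain (\ref{e17}) and (\ref{e18})), and once with $Z=N_\alpha$ a vector field from the local orthonormal normal frame (to obtain (\ref{e19}) and (\ref{e20})). In each case I expand $JZ$ by (\ref{e100}), expand $\overline{\nabla}_X Z$ by the Gauss or Weingarten formula (\ref{e13}), then distribute $J$ and $\overline{\nabla}_X$ using Leibniz together with the decomposition (\ref{e14}) of the normal connection, and finally match tangential and normal components.

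For the first application, the tangential components of $\overline{\nabla}_X(JY)=J(\overline{\nabla}_X Y)$ produce $\nabla_X(PY)-\sum_\alpha u_\alpha(Y) A_\alpha X$ on the left and $P(\nabla_X Y)+\sum_\alpha h_\alpha(X,Y)\xi_\alpha$ on the right; rearranging and using the definition $(\nabla_X P)(Y)=\nabla_X(PY)-P(\nabla_X Y)$ gives (\ref{e17}) directly. The normal components, expressed in the basis $\{N_\beta\}$, yield, for each index, an expression involving $X(u_\alpha(Y))$, $h_\alpha(X,PY)$, the $a_{\alpha\beta}h_\beta(X,Y)$ and the coupling terms from $l_{\alpha\beta}(X)u_\beta(Y)$; using $(\nabla_X u_\alpha)(Y)=X(u_\alpha(Y))-u_\alpha(\nabla_X Y)$, together with the symmetry $a_{\alpha\beta}=a_{\beta\alpha}$ from (\ref{e10})(ii) and the skew-symmetry $l_{\alpha\beta}=-l_{\beta\alpha}$ from (\ref{e140}), gives (\ref{e18}).

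For the second application, expanding $\overline{\nabla}_X(JN_\alpha)=\overline{\nabla}_X\bigl(\xi_\alpha+\sum_\beta a_{\alpha\beta}N_\beta\bigr)$ by Gauss and Weingarten and comparing with $J(-A_\alpha X+\nabla_X^{\bot}N_\alpha)$, the tangential part immediately produces $\nabla_X\xi_\alpha+P(A_\alpha X)-\sum_\beta a_{\alpha\beta}A_\beta X-\sum_\beta l_{\alpha\beta}(X)\xi_\beta=0$, which is (\ref{e19}). The normal part, collected in the $N_\beta$ direction, isolates $X(a_{\alpha\beta})$; to bring it into the stated form (\ref{e20}) I would use that $h_\beta(X,\xi_\alpha)=g(A_\beta X,\xi_\alpha)=u_\alpha(A_\beta X)$ by (\ref{e130}) and (\ref{e12})(ii), together with $l_{\alpha\beta}=-l_{\beta\alpha}$ to convert one of the coupling sums to the required sign.

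The argument is essentially a structural computation rather than a deep step: the main obstacle is bookkeeping, namely keeping the tangential and normal summations cleanly separated, correctly identifying $h_\beta(X,\xi_\alpha)$ with $u_\alpha(A_\beta X)$, and applying the symmetry/skew-symmetry relations $a_{\alpha\beta}=a_{\beta\alpha}$ and $l_{\alpha\beta}=-l_{\beta\alpha}$ at exactly the right stage so that the four identities appear in the normalized form stated in the theorem.
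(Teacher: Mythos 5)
Your proposal is correct and follows essentially the same route as the paper: expand $(\overline{\nabla}_X J)Y=0$ and $(\overline{\nabla}_X J)N_\alpha=0$ via the Gauss and Weingarten formulas together with the decompositions (\ref{e100}) and (\ref{e14}), then identify tangential and normal components. The details you indicate (the identification $h_\beta(X,\xi_\alpha)=u_\alpha(A_\beta X)$ and the use of $l_{\alpha\beta}=-l_{\beta\alpha}$ to normalize the coupling sums) are exactly the bookkeeping steps the paper's computation relies on.
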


\begin{proof}
 Using the Gauss and Weingarten formulae in $(\overline{\nabla}_{X} J)Y = 0 $, for any $X, Y \in \mathcal{X}(M)$ and identifying the tangential components (and the normal components, respectively), we obtain the equalities (\ref{e17}) and (\ref{e18}). Moreover, from
\[ \overline{\nabla}_{X}(J N_{\alpha})=\nabla_{X}\xi_{\alpha} -\sum_{\beta=1}^{r} a_{\alpha\beta}A_{\beta}X+\sum_{\beta=1}^{r}
[X(a_{\alpha\beta})+ h_{\beta}(X,\xi_{\alpha})+\sum_{\gamma=1}^{r} a_{\alpha\gamma} l_{\gamma \beta}(X)]N_{\beta} \] and
\[J(\overline{\nabla}_{X}N_{\alpha})=-P(A_{\alpha}X)+ \sum_{\beta=1}^{r} l_{\alpha\beta}(X)\xi_{\beta}-\sum_{\beta=1}^{r} [u_{\beta}(A_{\alpha}X)-\sum_{\gamma=1}^{r} a_{\gamma\beta}l_{\alpha\gamma}(X)]N_{\beta},\]
for any $X \in \mathcal{X}(M)$ and using $(\overline{\nabla}_{X}J) N_{\alpha}=0$ we obtain (\ref{e19}) and (\ref{e20}) by identifying the tangential and the normal components, respectively.
\end{proof}

\begin{remark}
If $M$ is an invariant submanifold in the locally metallic Riemannian manifold $(\overline{M},\overline{g}, J)$,
then $P$ is parallel with respect to the Levi-Civita connection $\nabla$ on $M$, where $P$ is the $(1, 1)$-tensor field
on the Riemannian manifold $(M, g)$ defined in (\ref{e100})(i).
\end{remark}

\section{Examples of structures induced on submanifolds by metallic Riemannian structures}
\label{sec:4}
\normalfont

We assume that the ambient space is $E^{2a+b}$ ($a,b \geq 1$ integer numbers) and for any point of $E^{2a+b}$ we have its
coordinates:
$$(x^{1},...,x^{a},y^{1},...,y^{a},z^{1},...,z^{b}):=(x^{i},y^{i},z^{j}),$$
where $i\in \{1,...,a\}$ and $j \in \{1,...,b\}$. The tangent space $T_{x}(E^{2a+b})$ is isomorphic with $E^{2a+b}$.
For $\lambda \in \{-1, 1\}$ we can construct two metallic structures on $E^{2a+b}$, $J_{\lambda}:E^{2a+b}\rightarrow E^{2a+b}$, given by:
\begin{equation} \label{e33}
J_{\lambda}\left(\frac{\partial}{\partial x^{i}},\frac{\partial}{\partial y^{i}},\frac{\partial}{\partial z^{j}}\right)=
\left(\frac{p}{2}\frac{\partial}{\partial x^{i}} + \frac{\lambda \sqrt{\Delta}}{2}\frac{\partial}{\partial y^{i}},\frac{p}{2}\frac{\partial}{\partial y^{i}} +
 \frac{\lambda \sqrt{\Delta}}{2}\frac{\partial}{\partial x^{i}},\left(\frac{p}{2} + \frac{\lambda \varepsilon_{j} \sqrt{\Delta}}{2}\right)\frac{\partial}{\partial z^{j}}\right),
\end{equation}
where $\varepsilon_{j} \in \{-1, 1\}$, $i\in \{1,...,a \}$, $j\in \{1,...,b \}$ and $\Delta = p^{2}+4q$.

We easily find that $J_{\lambda}^{2}=p J_{\lambda}+q  I$ and $(J_{\lambda}, <\cdot,\cdot>)$ is a metallic Riemannian structure on $E^{2a+b}$.

Thus, we obtain two metallic structures:  $J_{+1}$ (for $\lambda = 1$) and $J_{-1}$ (for $\lambda = -1$).
Moreover, for $\lambda\varepsilon_{j} = 1$, we obtain the following form of $J_{\lambda}$:
\begin{equation} \label{e34}
J_{\lambda}\left(\frac{\partial}{\partial x^{i}},\frac{\partial}{\partial y^{i}},\frac{\partial}{\partial z^{j}}\right)=
\left(\frac{p}{2}\frac{\partial}{\partial x^{i}} + \lambda \frac{\sqrt{\Delta}}{2}\frac{\partial}{\partial y^{i}},
\frac{p}{2}\frac{\partial}{\partial y^{i}} + \lambda \frac{\sqrt{\Delta}}{2}\frac{\partial}{\partial x^{i}},
\sigma_{p,q}\frac{\partial}{\partial z^{j}}\right),
 \end{equation}
 and for $\lambda\varepsilon_{j} = -1$, we obtain:
 \begin{equation} \label{e340}
J_{\lambda}\left(\frac{\partial}{\partial x^{i}},\frac{\partial}{\partial y^{i}},\frac{\partial}{\partial z^{j}}\right)=
\left(\frac{p}{2}\frac{\partial}{\partial x^{i}} + \lambda \frac{\sqrt{\Delta}}{2}\frac{\partial}{\partial y^{i}},
\frac{p}{2}\frac{\partial}{\partial y^{i}} + \lambda \frac{\sqrt{\Delta}}{2}\frac{\partial}{\partial x^{i}},
\overline{\sigma}_{p,q}\frac{\partial}{\partial z^{j}}\right),
 \end{equation}
where $\overline{\sigma}_{p,q}=p-\sigma$. Now, we can construct a $\Sigma $-metallic Riemannian structure on the sphere
$S^{2a+b-1}(R)\hookrightarrow E^{2a+b}$. The equation of sphere $S^{2a+b-1}(R)$ is:
\begin{equation} \label{e35}
\sum_{i=1}^{a}(x^{i})^{2}+\sum_{i=1}^{a}(y^{i})^{2}+\sum_{j=1}^{b}(z^{j})^{2}=R^{2},
\end{equation}
where $R$ is its radius and $(x^{1},...,x^{a},y^{1},...,y^{a},z^{1},...,z^{b}):=(x^{i},y^{i},z^{j})$ are the
coordinates of any point of $S^{2a+b-1}(R)$. We denote by:
\begin{equation} \label{e36}
\sum_{i=1}^{a}(x^{i})^{2}=r_{1}^{2},\:
\sum_{i=1}^{a}(y^{i})^{2}=r_{2}^{2},\:
\sum_{j=1}^{b}(z^{j})^{2}=r_{3}^{2}
\end{equation}
and we obtain $r_{1}^{2}+r_{2}^{2}+r_{3}^{2}=R^{2}$.

An unit normal vector field $N$ on the sphere $S^{2a+b-1}(R)$ is given by:
\begin{equation} \label{e37}
N= \frac{1}{R}(x^{1},...,x^{a},y^{1},...,y^{a},z^{1},...,z^{b}).
\end{equation}

Thus:
\begin{equation} \label{e38}
J_{\lambda} N= \frac{1}{R}\left(\left(\frac{p}{2}x^{i} +\lambda \frac{\sqrt{\Delta}}{2}y^{i}\right),\left(\frac{p}{2}y^{i} +\lambda \frac{\sqrt{\Delta}}{2}x^{i}\right),\sigma_{p,q}z^{j}\right),
\end{equation}
where $\Delta = p^{2}+4q$ and $\lambda \in\{-1, 1\}$.

For a vector field $X$ on $S^{2a+b-1}(R)$ we use the
following notation:
\[X=(X^{1},...,X^{a},Y^{1},...,Y^{a},Z^{1},...,Z^{b}):=(X^{i},Y^{i},Z^{j})\]
and from $<X,N>=0$ we obtain:
\begin{equation} \label{e39}
\sum_{i=1}^{a}\left(x^{i}X^{i}+y^{i}Y^{i}\right)+\sum_{j=1}^{b}z^{j}Z^{j}=0.
\end{equation}

If we decompose $J_{\lambda} N$ into tangential and normal components at the sphere $S^{2a+b-1}(R)$, for $\mathcal{A} = < J_{\lambda} N,N >$, we obtain:
\begin{equation} \label{e40}
 J_{\lambda} N= \xi+  \mathcal{A} N.
\end{equation}

In the followings we consider $\lambda\varepsilon_{j} = 1$. Thus, we obtain:
\begin{equation} \label{e41}
\mathcal{A}=\frac{1}{R^{2}}\left[\frac{p}{2}(r_{1}^{2}+r_{2}^{2}) +\lambda \sqrt{\Delta}\sum_{i=1}^{a}x^{i}y^{i} + \sigma_{p,q} r_{3}^{2}\right].
\end{equation}

The tangential component of $J_{\lambda} N$ at the sphere $S^{2a+b-1}(R)$ has the form:
\begin{equation} \label{e42}
\xi=\frac{1}{R}\left( \left(\frac{p}{2}-\mathcal{A}\right) x^{i} + \lambda\frac{\sqrt{\Delta}}{2}y^{i},
\left(\frac{p}{2}-\mathcal{A}\right) y^{i} + \lambda \frac{\sqrt{\Delta}}{2}x^{i},
\left(\frac{p}{2}-\mathcal{A} + \frac{\sqrt{\Delta}}{2}\right)z^{j} \right).
\end{equation}

From $u(X)=<X,\xi>$, we obtain:
\begin{equation} \label{e43}
u(X)=\lambda \frac{\sqrt{\Delta}}{2R}\left[\sum_{i=1}^{a}(y^{i}X^{i}+x^{i}Y^{i}) +\lambda \sum_{j=1}^{b}z^{j}Z^{j}\right].
\end{equation}

The decomposition of $J X$ into tangential and normal components at the sphere
$S^{2a+b-1}(R)$ (where $X$ is a vector field on $S^{2a+b-1}(R)$), is given by:
\begin{equation} \label{e44}
J_{\lambda} X = P X + u(X)N.
\end{equation}

Thus, we obtain:
\begin{equation} \label{e45}
P X=\left(\frac{p}{2}X^{i} + \lambda \frac{\sqrt{\Delta}}{2}Y^{i}-\frac{u(X)}{R}x^{i},
\frac{p}{2}Y^{i} + \lambda \frac{\sqrt{\Delta}}{2}X^{i}-\frac{u(X)}{R}y^{i},
\left(\sigma_{p,q}Z^{j}-\frac{u(X)}{R}z^{j}\right)\right),
\end{equation}
for any tangent vector $X:=(X^{i},Y^{i},Z^{j})$
at the sphere $S^{2a+b-1}(R)$ in a point $(x^{i},y^{i},z^{j})$,
for $i \in \{1,...,a\}$ and $j \in \{1,...,b\}$.

Therefore, from the relations (\ref{e41}), (\ref{e42}), (\ref{e43}) and (\ref{e45}) we
obtain the $\Sigma$-structure induced on the sphere
$S^{2a+b-1}(R)$ of codimension $1$ in the Euclidean space $E^{2a+b}$
by $J_{\lambda}$ from $E^{2a+b}$, given by $(P,<\cdot,\cdot>,\xi,u,\mathcal{A})$.

In particular, for $\lambda\varepsilon_{j} = 1$ (where $\lambda \in \{-1, 1\}$ and $\varepsilon_{j} \in \{-1, 1\}$), we obtain:
\begin{enumerate}
\item for $p=q=1$, two Golden structures, given by:
\begin{equation} \label{e46}
\Phi_{\lambda}\left(\frac{\partial}{\partial x^{i}},\frac{\partial}{\partial y^{i}},\frac{\partial}{\partial z^{j}}\right)=
\left(\frac{1}{2}\frac{\partial}{\partial x^{i}} +\lambda \frac{\sqrt{5}}{2}\frac{\partial}{\partial y^{i}},\frac{1}{2}\frac{\partial}{\partial y^{i}}+\lambda \frac{\sqrt{5}}{2}\frac{\partial}{\partial x^{i}}, \phi \frac{\partial}{\partial z^{j}}\right)
\end{equation}

\item for $p=2, q=1$, two Silver structures, given by:
\begin{equation} \label{e47}
Ag_{\lambda}\left(\frac{\partial}{\partial x^{i}},\frac{\partial}{\partial y^{i}},\frac{\partial}{\partial z^{j}}\right)=
\left(\frac{\partial}{\partial x^{i}} +\lambda \sqrt{2}\frac{\partial}{\partial y^{i}},
\frac{\partial}{\partial y^{i}} +\lambda \sqrt{2}\frac{\partial}{\partial x^{i}},
\sigma_{Ag} \frac{\partial}{\partial z^{j}}\right)
\end{equation}

\item for $p=3, q=1$, two Bronze structures, given by:
\begin{equation} \label{e48}
Br_{\lambda}\left(\frac{\partial}{\partial x^{i}},\frac{\partial}{\partial y^{i}},\frac{\partial}{\partial z^{j}}\right)=
\left(\frac{3}{2}\frac{\partial}{\partial x^{i}} +\lambda \frac{\sqrt{13}}{2}\frac{\partial}{\partial y^{i}},\frac{3}{2}\frac{\partial}{\partial y^{i}} +
\lambda \frac{\sqrt{13}}{2}\frac{\partial}{\partial x^{i}},\sigma_{Br}\frac{\partial}{\partial z^{j}}\right)
\end{equation}

\item for $p=1, q=2$, two Copper structures, given by:
\begin{equation} \label{e49}
Cu_{\lambda}\left(\frac{\partial}{\partial x^{i}},\frac{\partial}{\partial y^{i}},\frac{\partial}{\partial z^{j}}\right)=
\left(\frac{1}{2}\frac{\partial}{\partial x^{i}} +\lambda \frac{3}{2}\frac{\partial}{\partial y^{i}},
\frac{1}{2}\frac{\partial}{\partial y^{i}} +\lambda \frac{3}{2}\frac{\partial}{\partial x^{i}},
\sigma_{Cu}\frac{\partial}{\partial z^{j}}\right)
\end{equation}

\item for $p=1, q=3$, two Nickel structures, given by:
\begin{equation} \label{e50}
Ni_{\lambda}\left(\frac{\partial}{\partial x^{i}},\frac{\partial}{\partial y^{i}},\frac{\partial}{\partial z^{j}}\right)=
\left(\frac{1}{2}\frac{\partial}{\partial x^{i}} +\lambda \frac{\sqrt{13}}{2}\frac{\partial}{\partial y^{i}},
\frac{1}{2}\frac{\partial}{\partial y^{i}} + \lambda \frac{\sqrt{13}}{2}\frac{\partial}{\partial x^{i}},\sigma_{Ni}\frac{\partial}{\partial z^{j}}\right).
\end{equation}
\end{enumerate}

The induced structures on the sphere $S^{2a+b-1}(R)$ of codimension $1$ in the Euclidean space $E^{2a+b}$ are as follows:

\begin{enumerate}
\item for $p=q=1$, we obtain the $\sum$-Golden structure, given by:

$$ \mathcal{A}=\frac{1}{R^{2}}\left[\frac{1}{2}(r_{1}^{2}+r_{2}^{2}) +\lambda \sqrt{5}\sum_{i=1}^{a}x^{i}y^{i} + \phi r_{3}^{2}\right] $$
$$ \xi=\frac{1}{R}\left(\left(\frac{1}{2}-\mathcal{A}\right) x^{i} + \lambda \frac{\sqrt{5}}{2}y^{i},\left(\frac{1}{2}-\mathcal{A}\right) y^{i} + \lambda \frac{\sqrt{5}}{2}x^{i},\left(\frac{1}{2}-\mathcal{A} + \frac{\sqrt{5}}{2}\right)z^{j}\right) $$
$$ u(X)=\lambda \frac{\sqrt{5}}{2R}\left[\sum_{i=1}^{a}(y^{i}X^{i}+x^{i}Y^{i}) + \lambda \sum_{j=1}^{b}z^{j}Z^{j}\right] $$
$$ P X=\left(\frac{1}{2}X^{i} + \lambda \frac{\sqrt{5}}{2}Y^{i}-\frac{u(X)}{R}x^{i},\: \frac{1}{2}Y^{i} + \lambda \frac{\sqrt{5}}{2}X^{i}-\frac{u(X)}{R}y^{i}, \: \phi Z^{j}-\frac{u(X)}{R}z^{j}\right),$$
induced by the structure defined in (\ref{e46});

\item for $p=2, q=1$, we obtain the $\sum$-Silver structures, given by:
$$ \mathcal{A}=\frac{1}{R^{2}}\left[(r_{1}^{2}+r_{2}^{2}) + 2\lambda \sqrt{2}\sum_{i=1}^{a}x^{i}y^{i} + \sigma_{Ag} r_{3}^{2}\right]$$
$$ \xi=\frac{1}{R}\left(\left(1-\mathcal{A}\right) x^{i} + \lambda \sqrt{2}y^{i},\left(1-\mathcal{A}\right) y^{i} + \lambda \sqrt{2}x^{i},(1-\mathcal{A} + \sqrt{2})z^{j}\right) $$
$$ u(X)=\lambda \frac{\sqrt{2}}{R}\left[\sum_{i=1}^{a}(y^{i}X^{i}+x^{i}Y^{i}) + \lambda \sum_{j=1}^{b}z^{j}Z^{j}\right] $$
$$ P X=\left(X^{i} + \lambda \sqrt{2}Y^{i}-\frac{u(X)}{R}x^{i}, \: Y^{i} + \lambda \sqrt{2}X^{i}-\frac{u(X)}{R}y^{i}, \:\sigma_{Ag} Z^{j}-\frac{u(X)}{R}z^{j}\right),$$
induced by the structure defined in (\ref{e47});

\item for $p=3, q=1$, we obtain the $\sum$-Bronze structures, given by:
$$ \mathcal{A}=\frac{1}{R^{2}}\left[\frac{3}{2}(r_{1}^{2}+r_{2}^{2}) + \lambda \sqrt{13}\sum_{i=1}^{a}x^{i}y^{i} + \sigma_{Br} r_{3}^{2}\right] $$
$$ \xi=\frac{1}{R}\left(\left(\frac{3}{2}-\mathcal{A}\right) x^{i} + \lambda \frac{\sqrt{13}}{2}y^{i},\left(\frac{3}{2}-\mathcal{A}\right) y^{i} + \lambda \frac{\sqrt{13}}{2}x^{i},\left(\frac{3}{2}-\mathcal{A} + \frac{\sqrt{13}}{2}\right)z^{j}\right) $$
$$ u(X)=\lambda \frac{\sqrt{13}}{2R}\left[\sum_{i=1}^{a}(y^{i}X^{i}+x^{i}Y^{i}) + \lambda \sum_{j=1}^{b}z^{j}Z^{j}\right]$$
$$ P X=\left(\frac{3}{2}X^{i} + \lambda \frac{\sqrt{13}}{2}Y^{i}-\frac{u(X)}{R}x^{i}, \: \frac{3}{2}Y^{i} + \lambda \frac{\sqrt{13}}{2}X^{i}-\frac{u(X)}{R}y^{i}, \: \sigma_{Br} Z^{j}-\frac{u(X)}{R}z^{j}\right)$$
induced by the structure defined in (\ref{e48});

\item for $p=1, q=2$, we obtain the $\sum$-Copper structures, given by:
$$ \mathcal{A}=\frac{1}{R^{2}}\left[\frac{1}{2}(r_{1}^{2}+r_{2}^{2}) + 3 \lambda\sum_{i=1}^{a}x^{i}y^{i} + \sigma_{Cu} r_{3}^{2}\right] $$
$$ \xi=\frac{1}{R}\left(\left(\frac{1}{2}-\mathcal{A}\right) x^{i} +\lambda \frac{3}{2}y^{i},\left(\frac{1}{2}-\mathcal{A}\right) y^{i} +\lambda \frac{3}{2}x^{i},\left(\frac{1}{2}-\mathcal{A} + \frac{3}{2}\right)z^{j}\right) $$
$$ u(X)=\lambda \frac{3}{2R}\left[\sum_{i=1}^{a}(y^{i}X^{i}+x^{i}Y^{i}) +\lambda \sum_{j=1}^{b}z^{j}Z^{j}\right] $$
$$ P X=\left(\frac{1}{2}X^{i} +\lambda \frac{3}{2}Y^{i}-\frac{u(X)}{R}x^{i}, \: \frac{1}{2}Y^{i} +\lambda \frac{3}{2}X^{i}-\frac{u(X)}{R}y^{i},\: \sigma_{Cu} Z^{j}-\frac{u(X)}{R}z^{j}\right),$$
induced by the structure defined in (\ref{e49});

\item for $p=1, q=3$, we obtain the $\sum$-Nickel structures, given by:
$$ \mathcal{A}=\frac{1}{R^{2}}\left[\frac{1}{2}(r_{1}^{2}+r_{2}^{2}) + \lambda \sqrt{13}\sum_{i=1}^{a}x^{i}y^{i} + \sigma_{Ni} r_{3}^{2}\right] $$
$$ \xi=\frac{1}{R}\left(\left(\frac{1}{2}-\mathcal{A}\right) x^{i} +\lambda \frac{\sqrt{13}}{2}y^{i},\left(\frac{1}{2}-\mathcal{A}\right) y^{i} +\lambda \frac{\sqrt{13}}{2}x^{i},
\left(\frac{1}{2}-\mathcal{A} + \frac{\sqrt{13}}{2}\right)z^{j}\right)$$
$$ u(X)=\lambda \frac{\sqrt{13}}{2R}\left[\sum_{i=1}^{a}(y^{i}X^{i}+x^{i}Y^{i}) +\lambda \sum_{j=1}^{b}z^{j}Z^{j}\right] $$
$$ P X=\left(\frac{1}{2}X^{i} +\lambda \frac{\sqrt{13}}{2}Y^{i}-\frac{u(X)}{R}x^{i}, \: \frac{1}{2}Y^{i} + \lambda \frac{\sqrt{13}}{2}X^{i}-\frac{u(X)}{R}y^{i}, \: \sigma_{Ni} Z^{j}-\frac{u(X)}{R}z^{j}\right),$$
induced by the structure defined in (\ref{e50}).

\end{enumerate}

In conclusion, we can obtain $\Sigma$-metallic structures $(P,<\cdot,\cdot>,\xi,u,\mathcal{A})$, induced on the sphere
$S^{2a+b-1}(R)$ of codimension $1$ in the Euclidean space $E^{2a+b}$ by the metallic structure $J_{\lambda}$ from $E^{2a+b}$, for various positive integer values of $p$ and $q$.

\section{On the normality of $\Sigma $-metallic Riemannian structures}
\label{sec:3}
\normalfont

Let $N_{J}$ be the Nijenhuis torsion tensor field of $J$, defined by:
\begin{equation}\label{e15}
N_{J}(X, Y):=[JX, JY]+J^{2}[X, Y]-J[JX, Y]-J[X, JY],
\end{equation}
for any $X, Y \in \mathcal{X}(\overline{M})$. It is known that the Nijenhuis tensor of $J$ verifies (\cite{Yano}):
\begin{equation}\label{e16}
N_{J}(X,Y)=(\overline{\nabla}_{JX}J)(Y)-(\overline{\nabla}_{JY}J)(X)-J[(\overline{\nabla}_{X}J)(Y)-(\overline{\nabla}_{Y}J)(X)],
\end{equation}
for any $X, Y \in \mathcal{X}(\overline{M})$.

Thus, we remark that if the metallic structure $J$ on $\overline{M}$ is parallel with respect to $\overline{\nabla}$, then $N_{J}=0$.

     We shall define a normal $\Sigma $-metallic Riemannian structure on the submanifold $M$ in the metallic Riemannian manifold $(\overline{M},\overline{g}, J)$.

\begin{definition}
A $\Sigma $-metallic Riemannian structure defined on a submanifold $(M,g)$ of codimension $r$ in a Riemannian manifold $(\overline{M}, \overline{g})$  is said to be normal if:
\begin{equation}\label{e21}
N_{P}=2\sum_{\alpha=1}^{r}du_{\alpha}\otimes \xi_{\alpha}.
\end{equation}
\end{definition}

In the following considerations, let $M$ be a submanifold in the locally metallic Riemannian manifold $(\overline{M},\overline{g}, J)$ and let $\nabla$ be the Levi-Civita connection determined by the induced metric $g$ on $M$.

We denote by
$B_{\alpha}=:PA_{\alpha}-A_{\alpha}P$ and remark that $g(B_{\alpha}X, Y)=-g(B_{\alpha}Y, X)$, for any $X, Y \in \mathcal{X}(M)$.

\begin{theorem} If $M$ is a submanifold of codimension $r$ in a locally metallic Riemannian manifold $(\overline{M},\overline{g}, J)$
and $\Sigma =(P, g, u_{\alpha }, \xi _{\alpha }, (a_{\alpha \beta})_{r})$ is the structure induced on the Riemannian manifold $(M, g)$,
then the Nijenhuis torsion tensor field of the $(1, 1)$-tensor field $P$ and the $1$-forms $u_{\alpha}$  verify the equalities:
\begin{equation}\label{e22}
N_{P}(X, Y)=\sum_{\alpha=1}^{r}[g(X, \xi _{\alpha})B_{\alpha}Y-g(Y, \xi _{\alpha})B_{\alpha}X-g(B_{\alpha}X, Y)\xi _{\alpha}],
\end{equation}
\begin{equation}\label{e23}
2du_{\alpha}(X, Y)=-g(B_{\alpha}X, Y)+\sum_{\beta=1}^{r}[l_{\alpha\beta}(X)g(Y, \xi _{\beta})-l_{\alpha\beta}(Y)g(X, \xi _{\beta})],
\end{equation}
for any $X,Y \in \mathcal{X}(M)$, where $l_{\alpha\beta}$ are the coefficients of the normal connection $\nabla^{\bot}$ in the normal bundle $T^{\bot}(M)$. \\
\end{theorem}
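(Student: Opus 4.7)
My plan is to derive both identities by expressing the relevant tensors in terms of the covariant derivatives $\nabla P$ and $\nabla u_{\alpha}$ already computed in (\ref{e17}) and (\ref{e18}), and then exploiting the symmetries of $P$, $A_{\alpha}$, and $h_{\alpha}$.

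For (\ref{e22}), I start from the fact that for a torsion-free connection such as $\nabla$, a direct computation gives the Nijenhuis formula
\[
N_{P}(X,Y)=(\nabla_{PX}P)Y-(\nabla_{PY}P)X-P(\nabla_{X}P)Y+P(\nabla_{Y}P)X.
\]
I would substitute (\ref{e17}) into each of the four terms. The two $A_{\alpha}$-contributions combine into $u_{\alpha}(Y)(A_{\alpha}P-PA_{\alpha})X$ and $u_{\alpha}(X)(PA_{\alpha}-A_{\alpha}P)Y$, i.e.\ into $-u_{\alpha}(Y)B_{\alpha}X+u_{\alpha}(X)B_{\alpha}Y$, and by (\ref{e12})(ii) this is exactly the first two terms of (\ref{e22}). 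The remaining $\xi_{\alpha}$-contributions collect into $[h_{\alpha}(PX,Y)-h_{\alpha}(PY,X)]\xi_{\alpha}$, and here I use $h_{\alpha}(X,Y)=g(A_{\alpha}X,Y)$ with the symmetry of $P$ and $A_{\alpha}$ to rewrite
\[
h_{\alpha}(PX,Y)-h_{\alpha}(PY,X)=g(A_{\alpha}PX,Y)-g(PA_{\alpha}X,Y)=-g(B_{\alpha}X,Y),
\]
yielding the third term of (\ref{e22}).

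For (\ref{e23}), I would use the standard identity $2du_{\alpha}(X,Y)=(\nabla_{X}u_{\alpha})(Y)-(\nabla_{Y}u_{\alpha})(X)$, valid for the torsion-free $\nabla$, and insert (\ref{e18}). The symmetric sum $\sum_{\beta}a_{\beta\alpha}h_{\beta}(X,Y)$ drops out immediately by symmetry of $h_{\beta}$, the $l_{\alpha\beta}$ terms give the two bracketed terms of (\ref{e23}) after applying $u_{\beta}(Z)=g(Z,\xi_{\beta})$, and the remaining piece $-h_{\alpha}(X,PY)+h_{\alpha}(Y,PX)$ simplifies, by exactly the same manipulation as above, to $-g(B_{\alpha}X,Y)$.

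The only real obstacle is the bookkeeping in identifying the antisymmetric combination $g(A_{\alpha}PX,Y)-g(PA_{\alpha}X,Y)$ as $-g(B_{\alpha}X,Y)$; once the symmetries of $P$, $A_{\alpha}$, $g$, and $h_{\alpha}$ are used to cast everything into this form, the two identities fall out simultaneously, and the alternating property $g(B_{\alpha}X,Y)=-g(B_{\alpha}Y,X)$ already noted in the statement is automatic from $P^{\top}=P$ and $A_{\alpha}^{\top}=A_{\alpha}$.
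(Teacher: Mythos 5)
Your proof is correct and follows essentially the same route as the paper: both arguments express $N_P$ via the torsion-free identity $N_{P}(X,Y)=(\nabla_{PX}P)Y-(\nabla_{PY}P)X-P(\nabla_{X}P)Y+P(\nabla_{Y}P)X$ and $2du_{\alpha}(X,Y)=(\nabla_{X}u_{\alpha})(Y)-(\nabla_{Y}u_{\alpha})(X)$, substitute (\ref{e17}) and (\ref{e18}), and reduce the antisymmetric combinations to $-g(B_{\alpha}X,Y)$ using the symmetry of $P$, $A_{\alpha}$ and $h_{\alpha}$.
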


\begin{proof}
Using:
\[N_{P}(X, Y)=\sum_{\alpha=1}^{r}[g(A_{\alpha}PX, Y)-g(A_{\alpha}PY, X)]\xi _{\alpha}+
\]
\[+ \sum_{\alpha=1}^{r}[g(Y, \xi_{\alpha})A_{\alpha}(PX)-g(X, \xi_{\alpha})A_{\alpha}(PY)-\sum_{\alpha=1}^{r}[g(Y, \xi _{\alpha})P(A_{\alpha}X)-g(X, \xi _{\alpha})P(A_{\alpha}Y)]
\]
and from $g(A_{\alpha}PY, X)=g(PY, A_{\alpha}X)=g(Y, P(A_{\alpha}X))=g(P(A_{\alpha}X), Y)$ we obtain that the Nijenhuis torsion tensor field of the $(1, 1)$-tensor field $P$ from the $\Sigma $-metallic Riemannian structure has the form:
\[
N_{P}(X, Y)=-\sum_{\alpha=1}^{r}g((PA_{\alpha}-A_{\alpha}P)(X), Y)\xi _{\alpha}-
\]
\[
-\sum_{\alpha=1}^{r}g(Y, \xi _{\alpha})(PA_{\alpha}-A_{\alpha}P)(X)+\sum_{\alpha=1}^{r}g(X, \xi_{\alpha})(PA_{\alpha}-A_{\alpha}P)(Y),
\]
for any $X, Y \in \mathcal{X}(M)$ which implies (\ref{e22}).

From $2du_{\alpha}(X, Y)=X(u_{\alpha}(Y))-Y(u_{\alpha}(X))-u_{\alpha}([X, Y])$ we have:
\[
2du_{\alpha}(X, Y)=(\nabla_{X}u_{\alpha})(Y)-(\nabla_{Y}u_{\alpha})(X),
\]
for any $X, Y \in \mathcal{X}(M)$ and using (\ref{e20}) we obtain:
\[
2du_{\alpha}(X, Y)=g(A_{\alpha}Y, PX)-g(A_{\alpha}X, PY)+\sum_{\beta=1}^{r}[g(A_{\beta}X, Y)-g(A_{\beta}Y, X)]a_{\alpha\beta}+
\]
\[+\sum_{\beta=1}^{r}[g(Y, \xi _{\beta})l_{\alpha\beta}(X)-g(X, \xi _{\beta})l_{\alpha\beta}(Y)].\]

From $g(A_{\beta}X, Y)=g(A_{\beta}Y, X)$ we have:
\[
g(A_{\alpha}X, PY)-g(A_{\alpha}Y, PX)=g((PA_{\alpha}-A_{\alpha}P)(X), Y)=g(B_{\alpha}X, Y)
\]
and we obtain that the $1$-forms $u_{\alpha}$ of the $\Sigma $-metallic Riemannian structure induced on $M$ verify the equality (\ref{e23}).
\end{proof}

\begin{remark}
Let $M$ be a submanifold of codimension $r$ in a locally metallic Riemannian manifold $(\overline{M},\overline{g}, J)$ and let
$\Sigma =(P, g, u_{\alpha }, \xi _{\alpha }, (a_{\alpha \beta})_{r})$ be the structure induced on the Riemannian manifold $(M, g)$.
If the $(1, 1)$-tensor field $P$ of the $\Sigma $-metallic Riemannian structure on $M$ commutes with the Weingarten operators
$A_{\alpha}$, for any $\alpha \in \{1,...,r\}$ (i.e. $B_{\alpha}=0$), then the Nijenhuis torsion tensor field of $P$ vanishes on $M$ (i.e. $N_{P}=0)$.
\end{remark}

\begin{theorem}
If $M$ is a submanifold of codimension $r$ in the locally metallic Riemannian manifold $(\overline{M},\overline{g}, J)$, then:
\begin{equation}\label{e24}
N_{P}(X, Y)-2\sum_{\alpha=1}^{r}du_{\alpha}(X, Y)\xi _{\alpha}= \sum_{\alpha=1}^{r}[g(X, \xi _{\alpha})B_{\alpha}Y-g(Y, \xi _{\alpha})B_{\alpha}X]-
\end{equation}
\[-\sum_{\alpha=1}^{r}\sum_{\beta=1}^{r}[l_{\alpha\beta}(X)g(Y, \xi _{\beta})-l_{\alpha\beta}(Y)g(X, \xi _{\beta})]\xi _{\alpha}, \]
for any $X, Y\in \mathcal{X}(M)$, where $l_{\alpha\beta}$ are the coefficients of the normal connection in the normal bundle $T^{\bot}(M)$.
\end{theorem}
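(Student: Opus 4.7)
The plan is to obtain the formula as a direct algebraic consequence of the two identities (\ref{e22}) and (\ref{e23}) already proved in the preceding theorem, with no new differential geometry input needed. Since both formulas contain the common scalar $g(B_{\alpha}X,Y)$, the idea is to eliminate this quantity between them and read off the stated identity.

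First I would start from (\ref{e22}), which expresses $N_{P}(X,Y)$ as a sum of three groups of terms: the two $B_{\alpha}$-valued contributions $g(X,\xi_{\alpha})B_{\alpha}Y$ and $-g(Y,\xi_{\alpha})B_{\alpha}X$, and a scalar-weighted normal part $-g(B_{\alpha}X,Y)\xi_{\alpha}$. The first two groups already appear in the right-hand side of the statement, so the only remaining task is to rewrite the scalar coefficient $-g(B_{\alpha}X,Y)$ in a form that produces the $du_{\alpha}$ term and the $l_{\alpha\beta}$-term of (\ref{e24}).

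Second, I would solve (\ref{e23}) for $-g(B_{\alpha}X,Y)$, obtaining
\[
-g(B_{\alpha}X,Y)=2du_{\alpha}(X,Y)-\sum_{\beta=1}^{r}\bigl[l_{\alpha\beta}(X)g(Y,\xi_{\beta})-l_{\alpha\beta}(Y)g(X,\xi_{\beta})\bigr],
\]
and substitute this into the third group of terms of (\ref{e22}). Multiplying by $\xi_{\alpha}$ and summing over $\alpha$, the $2du_{\alpha}(X,Y)\xi_{\alpha}$ contribution can then be transposed to the left-hand side, producing exactly the combination $N_{P}(X,Y)-2\sum_{\alpha}du_{\alpha}(X,Y)\xi_{\alpha}$ required by (\ref{e24}).

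Finally, collecting the remaining terms on the right gives precisely the two summands of (\ref{e24}): the $B_{\alpha}$-part, which is inherited unchanged from (\ref{e22}), and the double sum in $l_{\alpha\beta}$ weighted by $\xi_{\alpha}$, which comes from the substitution. There is essentially no obstacle here; the only point requiring a small amount of care is the bookkeeping of signs and of the indices $\alpha,\beta$ in the double sum, since the $l_{\alpha\beta}$-term carries the index $\alpha$ of the $1$-form $u_{\alpha}$ and therefore of the basis vector $\xi_{\alpha}$ that multiplies it, while $\beta$ is summed against $g(\cdot,\xi_{\beta})$. The content of the theorem is thus purely the synthesis of the two identities (\ref{e22})--(\ref{e23}) of the previous theorem into a single relation between $N_{P}$, $du_{\alpha}$, $B_{\alpha}$ and $l_{\alpha\beta}$, which will be the key tool for analysing normality of the $\Sigma$-structure in the sequel.
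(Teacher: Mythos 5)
Your proposal is correct and is exactly the intended argument: the paper states this theorem without proof, treating it as the immediate algebraic consequence of combining (\ref{e22}) and (\ref{e23}) by eliminating $g(B_{\alpha}X,Y)$, which is precisely what you do. The sign and index bookkeeping in your substitution checks out and reproduces (\ref{e24}) exactly.
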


\begin{remark}
If the $\Sigma $-metallic Riemannian structure induced on $M$ is normal and the normal connection $\nabla^{\bot}$ of $M$ vanishes identically (i.e. $l_{\alpha \beta}=0$), then we obtain the equality:
\begin{equation}\label{e25}
\sum_{\alpha=1}^{r}g(X, \xi _{\alpha})(PA_{\alpha}-A_{\alpha}P)(Y)= \sum_{\alpha=1}^{r}g(Y, \xi _{\alpha})(PA_{\alpha}-A_{\alpha}P)(X),
\end{equation}
and this equality does not depend on the choice of a basis in the normal space $T_{x}^{\bot}(M)$, for any $x \in M$.
\end{remark}

\begin{theorem}
Let $M$ be a submanifold of codimension $r$ in a locally metallic Riemannian manifold $(\overline{M},\overline{g},J)$.
If the normal connection $\nabla^{\bot}$ vanishes identically on the normal bundle $T^{\bot}(M)$ and
the $(1, 1)$-tensor field $P$ of the $\Sigma $-metallic Riemannian structure induced on $M$ commutes with
every Weingarten operator $A_{\alpha}$, then the induced $\Sigma $-metallic Riemannian structure on $M$ is normal.
\end{theorem}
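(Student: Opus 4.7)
The plan is to reduce the statement to a direct substitution into equation (\ref{e24}) of the preceding theorem, which already encodes the obstruction to normality in terms of the two quantities hypothesized to vanish.

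First I would recall that, by Definition (the one following (\ref{e20})), the induced $\Sigma$-metallic Riemannian structure on $M$ is normal precisely when
\[
N_{P}(X,Y) - 2\sum_{\alpha=1}^{r} du_{\alpha}(X,Y)\,\xi_{\alpha} = 0
\]
for all $X,Y\in\mathcal{X}(M)$. Thus it suffices to show that the right-hand side of (\ref{e24}) vanishes under the two hypotheses. The first hypothesis, that the normal connection $\nabla^{\bot}$ vanishes identically on $T^{\bot}(M)$, means by (\ref{e14}) that $l_{\alpha\beta}(X)=0$ for every $X\in\mathcal{X}(M)$ and all $\alpha,\beta\in\{1,\dots,r\}$. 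The second hypothesis, that $P$ commutes with every Weingarten operator $A_{\alpha}$, means that $B_{\alpha}=PA_{\alpha}-A_{\alpha}P=0$ for every $\alpha$.

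Next I would substitute these two vanishings into (\ref{e24}). The first sum on the right-hand side is a linear combination of the $B_{\alpha}X$ and $B_{\alpha}Y$ and therefore vanishes term by term; the double sum is a linear combination of the $l_{\alpha\beta}(X)$ and $l_{\alpha\beta}(Y)$ and likewise vanishes term by term. Consequently,
\[
N_{P}(X,Y) - 2\sum_{\alpha=1}^{r} du_{\alpha}(X,Y)\,\xi_{\alpha} = 0
\]
for all $X,Y\in\mathcal{X}(M)$, which is exactly the normality condition (\ref{e21}), completing the proof.

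There is essentially no main obstacle here: all the hard work has already been packaged into identity (\ref{e24}), which was derived by combining the tangential/normal decomposition of $\overline{\nabla}J=0$ with the Gauss--Weingarten formulas. The only care needed is to invoke Remark (\ref{e140}) implicitly (so that $l_{\alpha\beta}=0$ for all pairs, not only the independent ones), and to note that the resulting identity is tensorial and hence independent of the choice of local orthonormal frame of $T^{\bot}(M)$, in line with the observation made in the remark preceding the statement.
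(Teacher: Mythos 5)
Your proof is correct and follows exactly the route the paper intends: the theorem is an immediate corollary of identity (\ref{e24}), obtained by setting $B_{\alpha}=0$ and $l_{\alpha\beta}=0$ on its right-hand side, which is why the paper states it without a separate proof. Your substitution is valid and matches the definition of normality in (\ref{e21}).
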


\begin{remark} The matrix $\mathcal{A}:=(a_{\alpha\beta})_{r}$ of the structure $\Sigma$ induced on an in\-va\-riant submanifold $M$ by the metallic structure $J$ from the Riemannian manifold $(\overline{M},\overline{g})$ is a metallic matrix, that is a matrix which verifies:
$$ \mathcal{A}^{2}=p \mathcal{A}+ q I_{r}, $$ where $I_{r}$  is the identically matrix of order $r$ (\cite{Hr4}).

Conversely, if $\mathcal{A}:=(a_{\alpha\beta})_{r}$ is a metallic matrix, then
\[
\sum_{\gamma=1}^{r}a_{\alpha\gamma }a_{\gamma \beta}=p a_{\alpha\beta} + q \delta _{\alpha\beta}
\]
and from (\ref{e10})(i) we obtain $u_{\beta}(\xi_{\alpha})=0$, for any $\alpha, \beta \in \{1,..., r\}$,
which implies that $P^{2}X=pPX+qX$ and $JX=PX$, for any $X \in \mathcal{X}(M)$.
Thus, the Riemannian submanifold $(M,g)$ is an invariant submanifold in the metallic Riemannian manifold $(\overline{M},\overline{g}, J)$.
\end{remark}

\begin{theorem}
Let $M$ be a submanifold of codimension $r\geq 2$ in a locally metallic Riemannian manifold $(\overline{M},\overline{g}, J)$. If the normal connection $\nabla^{\bot}$
vanishes identically on the normal bundle $T^{\bot}(M)$ and $M$ is a non-invariant submanifold with respect to the metallic structure $J$, then the vector fields $\{\xi _{1},..., \xi _{r}\}$ are linearly independent.
\end{theorem}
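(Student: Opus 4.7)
My plan is to argue by contradiction: assume a nontrivial $\mathbb{R}$-linear dependence $\sum_{\alpha=1}^{r}c_{\alpha}\xi_{\alpha}\equiv 0$ on $M$ and deduce that every $\xi_{\alpha}$ vanishes (so that $M$ is an invariant submanifold in the sense of the first remark of Section~3), contradicting the hypothesis. The first step is to translate this dependence into an algebraic condition on $\mathcal{A}=(a_{\alpha\beta})_{r}$. Pairing the relation with each $\xi_{\beta}$ and using (\ref{e12})(ii) yields $Gc=0$ pointwise on $M$, where $G=(g(\xi_{\alpha},\xi_{\beta}))_{r}$ is the Gram matrix. By (\ref{e11})(i) one has $G=qI_{r}+p\mathcal{A}-\mathcal{A}^{2}$, so the constant vector $c$ satisfies $\mathcal{A}(x)^{2}c=p\,\mathcal{A}(x)c+q\,c$ at every $x\in M$. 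Since $\mathcal{A}$ is symmetric by (\ref{e10})(ii), $c$ decomposes at each point into $\mathcal{A}(x)$-eigenvectors with eigenvalues in $\{\sigma_{p,q},\overline{\sigma}_{p,q}\}$.

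Next I would exploit $\nabla^{\bot}\equiv 0$. Since $l_{\alpha\beta}\equiv 0$, any constant orthogonal change of the parallel frame $\{N_{\alpha}\}$ preserves the entire $\Sigma$-structure (and the relation $l_{\alpha\beta}=0$); I would rotate the frame so that $c$ becomes the first basis vector, so that in the new frame $\xi_{1}\equiv 0$. Then (\ref{e11})(ii) with $\alpha=1$ reduces to $\sum_{\beta}a_{1\beta}\xi_{\beta}=0$; (\ref{e20}) with $l_{\alpha\beta}=0$ and $u_{1}\equiv 0$ gives $X(a_{11})=0$ (so that $a_{11}$ is constant) together with $X(a_{1\beta})=-u_{\beta}(A_{1}X)$ for $\beta\geq 2$; and (\ref{e11})(i) applied to the first row forces $a_{11}^{2}+\sum_{\beta\geq 2}a_{1\beta}^{2}=pa_{11}+q$.

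The final step is an inductive reduction. If $a_{1\beta}\equiv 0$ for every $\beta\geq 2$, then $a_{11}\in\{\sigma_{p,q},\overline{\sigma}_{p,q}\}$, so $N_{1}$ is a parallel $J$-eigen-normal of $M$; one splits off the $N_{1}$-direction and re-runs the argument on the parallel complementary subbundle of $T^{\bot}M$ of rank $r-1$ (this is where $r\geq 2$ gives room for the induction). If instead some $a_{1\beta}$ with $\beta\geq 2$ is nonzero, then $\sum_{\beta\geq 2}a_{1\beta}\xi_{\beta}\equiv 0$ is itself a nontrivial dependence, and applying the previous two steps to it produces further vanishing rows of $\mathcal{A}^{2}-p\mathcal{A}-qI_{r}$. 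Either way, after finitely many iterations one arrives at $G\equiv 0$, hence $\xi_{\alpha}\equiv 0$ for every $\alpha$, and $M$ is invariant, contradicting the hypothesis. The delicate step is exactly this inductive descent: it requires coupling the algebraic identities of (\ref{e11}) with the Codazzi-type evolution equations (\ref{e18})--(\ref{e20}) to rule out a partially $J$-invariant configuration of $T^{\bot}M$, and this is precisely where the assumption $r\geq 2$ is used.
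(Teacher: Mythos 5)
Your opening step coincides with the paper's: pairing the dependence with each $\xi_{\beta}$ and using (\ref{e11})(i) together with (\ref{e12})(ii) gives $Gc=0$ for the Gram matrix $G=qI_{r}+p\mathcal{A}-\mathcal{A}^{2}$. The paper stops there by asserting that $\det G\neq 0$ whenever $M$ is non-invariant; you instead try to wring a contradiction out of $Gc=0$ by an inductive descent, and that descent is where your argument breaks. After rotating the parallel frame so that $\xi_{1}\equiv 0$, your first branch ($a_{1\beta}\equiv 0$ for all $\beta\geq 2$, hence $a_{11}\in\{\sigma_{p,q},\overline{\sigma}_{p,q}\}$ and $JN_{1}=a_{11}N_{1}$) terminates with nothing left to contradict: your target is that \emph{every} $\xi_{\alpha}$ vanishes, but non-invariance of $M$ only requires that \emph{some} $\xi_{\alpha}\neq 0$, which is perfectly compatible with $\xi_{1}\equiv 0$. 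Moreover, once the $N_{1}$-direction is split off there is no residual linear dependence among $\xi_{2},\dots,\xi_{r}$ to restart the induction on the rank-$(r-1)$ subbundle, so the assertion that ``after finitely many iterations one arrives at $G\equiv 0$'' is unsupported. You correctly flag this as the delicate step, but the plan does not actually carry it out.

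The configuration your descent fails to exclude genuinely occurs, so no amount of coupling (\ref{e11}) with (\ref{e18})--(\ref{e20}) will close the gap under the stated hypotheses. Take $\overline{M}=E^{4}$ with the parallel metallic structure of type (\ref{e33}) given by $J\partial_{x^{i}}=\sigma_{p,q}\partial_{x^{i}}$, $J\partial_{y^{i}}=\overline{\sigma}_{p,q}\partial_{y^{i}}$ ($i=1,2$), and let $M$ be the $2$-plane spanned by $\partial_{x^{1}}$ and $e:=\cos\theta\,\partial_{x^{2}}+\sin\theta\,\partial_{y^{1}}$ with $\sin\theta\cos\theta\neq 0$. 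Then $M$ is totally geodesic with vanishing normal connection, $N_{2}=\partial_{y^{2}}$ satisfies $JN_{2}=\overline{\sigma}_{p,q}N_{2}$ so $\xi_{2}=0$, while $\xi_{1}=-\sqrt{\Delta}\,\sin\theta\cos\theta\; e\neq 0$, so $M$ is non-invariant and of codimension $r=2$, yet $\{\xi_{1},\xi_{2}\}$ is linearly dependent. (The same example shows that the key claim in the paper's own proof is equally unjustified: the preceding remark only yields $G\neq 0$ for non-invariant $M$, not $\det G\neq 0$; here $G=\mathrm{diag}(\Delta\sin^{2}\theta\cos^{2}\theta,\,0)$ is nonzero but singular.) So the statement needs a stronger hypothesis --- e.g.\ that no normal direction is $J$-invariant --- before either your argument or the paper's can be completed.
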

\begin{proof}
From (\ref{e10})(i) we have:
\[
g(\xi _{\alpha}, \xi _{\beta})=q \delta _{\alpha\beta}+p a_{\alpha\beta}-\sum_{\gamma=1}^{r}a_{\alpha\gamma }a_{\gamma \beta}.
\]
Let $x_{1}, ...,x_{r}$ be real numbers with the property $x_{1}\xi_{1}+...+x_{r}\xi_{r}=0$ in any point $x \in M$.
Applying $g(\xi _{\alpha}, \cdot)$ in the last equality ($\alpha \in \{1,...,r\}$), we obtain a linear system, given by:
\begin{equation} \label{e26}
\sum_{i=1}^{r}x_{i}\Gamma_{ij}=0
\end{equation}
for any $j \in \{1,...,r\}$, where $\Gamma_{ii}= q+pa_{ii}-\sum_{\gamma }a_{i \gamma }^{2}$ and $\Gamma_{ij}= pa_{ij}-\sum_{\gamma }a_{i\gamma }a_{\gamma j}$ for $i \neq j$ and $i, j \in \{1,...,r\}$.

The determinant of the matrix of the linear system (\ref{e26}) is the determinant of the matrix $\mathcal{B} = q I_{r}+p \mathcal{A}-\mathcal{A}^{2}$, where $\mathcal{A} = (a_{\alpha\beta})_r$. Using the last remark, we observe that the determinant of the matrix $\mathcal{B}$ is not zero when  $M$ is a non-invariant submanifold with respect to the metallic structure $J$.
Therefore, the linear system of equations (\ref{e26}) has the unique solution $x_{1}=...=x_{r}=0$.
Thus, the vector fields $\{\xi_{1},...,\xi_{r}\}$ are linearly independent.
\end{proof}

\begin{theorem}
Let $M$ be a submanifold  of codimension $r \geq 1$ in a locally metallic Riemannian manifold $(\overline{M},\overline{g}, J)$.
If the normal connection $\nabla^{\bot}$ vanishes identically on the normal bundle $T^{\bot}(M)$ and $M$ is a non-invariant
submanifold with respect to the metallic structure $J$, then the induced $\Sigma $-metallic Riemannian structure on $M$
is normal if and only if the $(1, 1)$-tensor field $P$ commutes with every Weingarten operator $A_{\alpha}$, for any $\alpha \in \{1,..., r\}$.
\end{theorem}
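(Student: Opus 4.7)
The backward direction, ``$P$ commutes with every $A_\alpha$ $\Rightarrow$ normality,'' is precisely the content of the preceding theorem, so I concentrate on the forward direction. I plan to start from the master identity (\ref{e24}): under $l_{\alpha\beta}\equiv 0$ (vanishing normal connection) and the normality assumption $N_P = 2\sum_\alpha du_\alpha\otimes\xi_\alpha$, the left-hand side of (\ref{e24}) is zero and the $l_{\alpha\beta}$-sum on the right disappears, leaving
$$\sum_{\alpha=1}^{r}\bigl[u_\alpha(X) B_\alpha Y - u_\alpha(Y) B_\alpha X\bigr] = 0$$
for all $X,Y\in\mathcal{X}(M)$. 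The task is to deduce $B_\alpha\equiv 0$ on $TM$ for every $\alpha$.

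The decisive tool is the Gram matrix $G := \bigl(g(\xi_\alpha,\xi_\beta)\bigr)$, which by (\ref{e11})(i) equals $qI_r + p\mathcal{A}-\mathcal{A}^2$; by the preceding theorem on linear independence of the $\xi_\alpha$ together with the metallic-matrix remark, $G$ is pointwise invertible precisely because $M$ is non-invariant. Setting $X=\xi_\beta$ in the displayed identity gives
$$\sum_\alpha G_{\beta\alpha}\, B_\alpha Y \; = \; \sum_\alpha u_\alpha(Y)\, B_\alpha\xi_\beta. \qquad (\ast)$$
Feeding into $(\ast)$ any $Y$ orthogonal to every $\xi_\alpha$ kills the right-hand side, and invertibility of $G$ then forces $B_\alpha Y = 0$. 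Hence each $B_\alpha$ already annihilates the orthogonal complement of $V_x := \mathrm{span}\{\xi_1(x),\dots,\xi_r(x)\}$.

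It remains to kill $B_\alpha$ on $V$ itself. Setting $Y=\xi_\mu$ in $(\ast)$ and pairing with $\xi_\delta$ via $g$ introduces $L_{\beta\mu\delta} := \sum_\alpha G_{\beta\alpha}\, g(B_\alpha\xi_\mu,\xi_\delta)$. The antisymmetry $g(B_\alpha X,Y) = -g(B_\alpha Y,X)$ makes $L$ antisymmetric in $(\mu,\delta)$, while $(\ast)$ makes $L$ symmetric in $(\beta,\mu)$; cycling through the three indices on these two symmetries forces $L_{\beta\mu\delta}\equiv 0$, and invertibility of $G$ yields $g(B_\alpha\xi_\mu,\xi_\delta)=0$ for all $\alpha,\mu,\delta$. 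Finally, pairing $(\ast)$ itself with $\xi_\gamma$ and using both this vanishing and the $g$-antisymmetry of $B_\alpha$ produces $\sum_\alpha G_{\beta\alpha}\, g(B_\alpha\xi_\gamma, Y)=0$ for every $Y$; one more appeal to the invertibility of $G$ gives $B_\alpha\xi_\gamma = 0$. Combining the two steps, $B_\alpha$ vanishes on $V_x\oplus V_x^\perp = T_xM$, i.e., $PA_\alpha = A_\alpha P$ for each $\alpha$.

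The delicate point in this plan is the three-index antisymmetrization that kills $g(B_\alpha\xi_\mu,\xi_\delta)$; the hypothesis that $M$ be non-invariant is used only, but essentially twice, through the invertibility of the Gram matrix $G$, and the hypothesis $\nabla^{\bot}=0$ enters only once, to trim (\ref{e24}) to its tractable form.
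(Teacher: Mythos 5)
Your proof is correct, and its engine is the same one the paper uses --- the observation that a form symmetric in one pair of slots and antisymmetric in another must vanish, combined with the invertibility of the Gram matrix $G=\bigl(g(\xi_\alpha,\xi_\beta)\bigr)=qI_r+p\mathcal{A}-\mathcal{A}^2$ (equivalently, the linear independence of the $\xi_\alpha$, with the $r=1$ case read off from $g(\xi,\xi)=q+pa-a^2\neq 0$). The difference is in the packaging. The paper never decomposes $T_xM$: starting from (\ref{e27}) it forms the trilinear quantity $T(X,Y,Z)=\sum_\alpha g(X,\xi_\alpha)\,g(B_\alpha Y,Z)$ on all of $\mathcal{X}(M)^3$, notes that (\ref{e28}) makes $T$ symmetric in $(X,Y)$ while the $g$-skew-symmetry of $B_\alpha$ makes it antisymmetric in $(Y,Z)$, concludes $T\equiv 0$ in one stroke (equations (\ref{e29})--(\ref{e31})), and only then invokes linear independence of the $\xi_\alpha$ once to peel off each $g(B_\alpha X,Z)=0$. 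You instead specialize $X=\xi_\beta$ immediately, split $T_xM=V_x\oplus V_x^{\perp}$ with $V_x=\mathrm{span}\{\xi_\alpha\}$, dispose of $V_x^{\perp}$ trivially, and run the symmetric/antisymmetric cancellation only on the finite-dimensional array $L_{\beta\mu\delta}$, paying for this with three separate appeals to $G^{-1}$. Both arguments are sound and rest on exactly the same hypotheses; the paper's version is shorter and uniform in $X,Y,Z$, while yours makes explicit where each hypothesis acts (non-invariance only through $\det G\neq 0$, the vanishing of $\nabla^{\perp}$ only in trimming (\ref{e24})), which is a useful piece of bookkeeping the paper leaves implicit.
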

\begin{proof}
We assume that the induced $\Sigma$-metallic Riemannian structure on $M$ is normal and we prove that the $(1, 1)$-tensor field $P$
commutes with every Weingarten operator $A_{\alpha}$ (i.e. $PA_{\alpha}=A_{\alpha}P$), for any $\alpha \in \{1,...,r\}$.

If the normal connection vanishes identically (thus $l_{\alpha\beta}=0$), then:
\begin{equation} \label{e27}
\sum_{\alpha=1}^{r}g(X, \xi _{\alpha})B_{\alpha}Y=\sum_{\alpha=1}^{r}g(Y, \xi _{\alpha})B_{\alpha}X.
\end{equation}

Multiplying the equality (\ref{e27}) by $Z \in \mathcal{X}(M)$, we obtain:
\begin{equation} \label{e28}
\sum_{\alpha=1}^{r}g(X, \xi _{\alpha})g(B_{\alpha} Y, Z)=\sum_{\alpha=1}^{r}g(Y, \xi _{\alpha})g(B_{\alpha} X, Z),
\end{equation}
for any $X, Y, Z \in \mathcal{X}(M)$.

Inverting $Y$ by $Z$ in the last equality and using that $B_{\alpha}$ is $g$-skew symmetric (i.e. $g(B_{\alpha}Y, Z)+g(B_{\alpha}Z, Y)=0$),
by summing these relations we obtain:
\[
\sum_{\alpha=1}^{r}g(Y, \xi _{\alpha})g(B_{\alpha}X, Z)+\sum_{\alpha=1}^{r}g(Z, \xi_{\alpha})g(B_{\alpha}X, Y)=0.
\]
Therefore:
\begin{equation} \label{e29}
\sum_{\alpha=1}^{r}g(Y, \xi _{\alpha})B_{\alpha}X+\sum_{\alpha=1}^{r}g(B_{\alpha}X, Y)\xi_{\alpha}=0.
\end{equation}

Inverting $X$ by $Y$ in the last equality and summing these relations we obtain:
\[
\sum_{\alpha=1}^{r}g(Y, \xi _{\alpha})B_{\alpha}X+\sum_{\alpha=1}^{r}g(X, \xi _{\alpha})B_{\alpha}Y=0
\]
and multiplying this equality by $Z \in \mathcal{X}(M)$ we get:
\begin{equation} \label{e30}
\sum_{\alpha=1}^{r}g(Y, \xi _{\alpha})g(B_{\alpha}X,Z)+\sum_{\alpha=1}^{r}g(X, \xi _{\alpha})g(B_{\alpha}Y,Z)=0.
\end{equation}

Using (\ref{e28}) and (\ref{e30}) it follows that:
\begin{equation}\label{e31}
\sum_{\alpha=1}^{r}g(Y, \xi _{\alpha})g(B_{\alpha}X,Z)=0,
\end{equation}
for any $X,Y, Z \in \mathcal{X}(M)$.

If $M$ is non-invariant submanifold of codimension $r \geq 2$ in a locally metallic Riemannian manifold $(\overline{M},\overline{g}, J)$
and the normal connection $\nabla^{\bot}$ vanishes identically on the normal bundle $T^{\bot}(M)$,
from the last lemma we obtained  that the vector fields $\xi _{1},..., \xi _{r}$ are linearly independent in any point $x \in M$.
Due to these $r$ linearly independent vector fields on $M$ we have $r\leq n$. It follows that there exists a vector field $Y \in \mathcal{X}(M)$
which is orthogonal on the space spanned by $\{\xi _{1},..., \xi _{r}\}-\{\xi _{\alpha}\}$ and $g(Y, \xi _{\alpha})\neq 0$. Hence, we obtain $B_{\alpha}X=0$ which is equivalent to $PA_{\alpha}=A_{\alpha}P$, for any $\alpha \in \{1,..., r\}$.

For $r=1$ we have $g(Y, \xi)BX=0$, where $B=PA-AP$. For $Y=\xi $ we have $g(\xi , \xi )BX=0$.
But $g(\xi , \xi )=q+p a-a^{2} \neq 0$ (because $M$ is non-invariant submanifold) and we obtain $BX=0$, for any $X \in \mathcal{X}(M)$, which is equivalent to $PA=AP$.
\end{proof}

\begin{remark}
If $M$ is a non-invariant totally umbilical (or totally geodesic) $n$-dimensional submanifold of codimension $r \geq 1$
in a locally metallic Riemannian manifold $(\overline{M},\overline{g}, J)$ such that the normal connection $\nabla^{\bot}$ vanishes identically,
then the $\Sigma $-metallic Riemannian structure induced on the submanifold $M$ is normal.
\end{remark}

%\textbf{Acknowledgements}: The authors want to express their gratitude to the referee of this paper.

\linespread{1}
Cristina E. Hretcanu, \\ Stefan cel Mare University of Suceava, Romania, e-mail: criselenab@yahoo.com,\\
\linespread{1}
Adara M. Blaga, \\ West University of Timisoara, Romania, e-mail: adarablaga@yahoo.com.


\begin{thebibliography}{}

\bibitem{CrHr} M. Crasmareanu and C.E. Hretcanu, \textit{Golden differential geometry}, Chaos Solitons Fractals \textbf{38} (2008), no. 5, 1229--1238. MR2456523

\bibitem{CrHrMu} M. Crasmareanu, C.E. Hretcanu and M.I. Munteanu, \textit{Golden- and product-shaped hypersurfaces in real space forms}, Int. J. Geom. Methods Mod. Phys. 10 (2013), no. 4, 1320006, 9 pp. MR3037234

\bibitem{Goldberg1} S.I. Goldberg and K. Yano, \textit{Polynomial structures on manifolds}, Kodai Math. Sem. Rep. \textbf{22} (1970) 199--218. MR0267478

\bibitem{Goldberg2} S.I. Goldberg and N.C. Petridis, \textit{Differentiable solutions of algebraic equations on manifolds}, Kodai Math. Sem. Rep. \textbf{25} (1973), 111--128. MR0315627

\bibitem{Hr1} C.E. Hretcanu, \textit{Induced structures on spheres and product of spheres}, An. Stiint. Univ. Al. I. Cuza Iasi Mat. (N.S.), \textbf{54} (2008), no.1,  39--50

\bibitem{Hr2} C.E. Hretcanu and M. Crasmareanu, \textit{On some invariant submanifolds in a Riemannian manifold with Golden structure}, An. Stiint. Univ. Al. I. Cuza Iasi. Mat. (N.S.) \textbf{53} (2007), suppl. 1, 199--211. MR2522394

\bibitem{Hr3} C.E. Hretcanu and M.C. Crasmareanu, \textit{Applications of the Golden ratio on Riemannian manifolds}, Turkish J. Math. \textbf{33} (2009), no. 2, 179--191. MR2537561

\bibitem{Hr4} C.E. Hretcanu and M. Crasmareanu, \textit{Metallic structures on Riemannian manifolds}, Rev. Un. Mat. Argentina \textbf{54} (2013), no. 2, 15--27. MR3263648

\bibitem{Pitis} G. Pitis, \textit{On some submanifolds of a locally product manifold}, Kodai Math. J. \textbf{9} (1986), no. 3, 327--333. MR0856679

\bibitem{Spinadel} V.W. de Spinadel, \textit{The metallic means family and forbidden symmetries}, Int. Math. J. \textbf{2} (2002), no. 3, 279--288. MR1867157

\bibitem{Yano} K. Yano, M. Kon, \textit{Structures on Manifolds}, World Scientific, Singapore, Series in pure matematics, 3, (1984).

\end{thebibliography}
\end{document}